\newtheorem{rema}{Remark}
\newtheorem{lemm}{Lemma}
\newtheorem{theo}{Theorem}
\newtheorem{coro}{Corollary}
\newcommand{\M}{{\cal M}}
\newcommand{\R}[1][]{\ensuremath{{\mathbb{R}^{#1}} }}
\renewcommand{\S}[1][]{\ensuremath{{\mathbb{S}^{#1}} }}
\renewcommand{\H}[1][]{\ensuremath{{\mathbb{H}^{#1}} }}
\newcommand{\q}[1][]{\ensuremath{\mathbb Q^{n+1}_c}}
\newcommand{\s}{{\cal S}}
\newcommand{\<}{\langle}
\renewcommand{\>}{\rangle}
\newcommand{\ga}{\gamma}
\newcommand{\pa}{\partial}
\newcommand{\eps}{\varepsilon}
\newcommand{\te}{\theta}
\newcommand{\ka}{\kappa}
\newcommand{\co}{{\texttt{cos}_c}}
\newcommand{\si}{{\texttt{sin}_c}}
\newcommand{\ct}{{\texttt{cot}_c}}
\newcommand{\coca}{{\texttt{cos}^2_c}}
\newcommand{\sica}{{\texttt{sin}^2_c}}
\date{}
\title{Codimension two marginally trapped submanifolds in Robertson-Walker spacetimes}
\author{ Henri Anciaux\footnote{Universidade de S\~ao Paulo; supported by CNPq (PQ 306154/2011-0) and Fapesp (2011/21362-2)}, Nastassja Cipriani\footnote{KU Leuven; supported by Belgian Interuniversity Attraction Pole  P07/18 (Dygest)}}
\begin{document}

\maketitle

\centerline{\textbf {\large{Abstract}}}

\bigskip

{\small  We give a local characterization of codimension two  submanifolds which are marginally trapped in Robertson-Walker spaces, in terms of an algebraic equation to be satisfied by the height function. We prove the existence of a large number of local solutions. We refine the description in the case of curves with lightlike acceleration in three-dimensional spaces Robertson-Walker spaces, and in the case of codimension two submanifolds whose second fundamental form is lightlike.}

\bigskip

\centerline{\small \em 2010 MSC: 53A10,  53C42
\em }

\bigskip

\section*{Introduction}
 A spacelike surface of a spacetime is said to be \em trapped \em if its mean curvature vector is timelike. This concept  was first introduced in \cite{Pe} in the study of singularities of spacetime. 
The limiting case of \em marginally trapped surfaces, \em i.e.\ surfaces whose mean curvature vector is lightlike, has recently attracted the attention of both  physicists and mathematicians. In General Relativity, trapped surfaces are relevant  to describe those regions of  spacetime characterized by the presence of a black hole. 
In particular, marginally trapped tubes, i.e.\ hypersurfaces foliated by marginally trapped surfaces, may describe, as they do in the Schwarzschild model, the horizon separating the black hole from the rest of  spacetime.
This makes the study of marginally trapped surfaces crucial in determining such horizons.

From the pure geometric viewpoint, there is no reason to restrict ourselves to the case of spacelike surfaces in spacetimes since the definition makes sense in a more general framework: given  a  submanifold $\s$ of a pseudo-Riemannian manifold $({\cal N},g)$, we shall say that $\s$ is \em marginally trapped, \em and use the notation MT,  if $\vec{H}$ is a null vector, i.e.\ $g(\vec{H},\vec{H})$ vanishes identically. Two obvious necessary conditions of  this to happen are that $(i)$  $\s$ has codimension greater than or equal to two and $(ii)$ the induced metric on the normal bundle is indefinite.

The underlying partial differential equation is underdetermined, so we may expect that MT submanifolds are abundant. On the other hand, the fact that the codimension of the surface is greater than one makes  the analysis of the equation difficult. In the context of MT surfaces in spacetimes, a way to overcome this difficulty consists of assuming that the MT surface is contained in a Cauchy hypersurface. This reduces the codimension to one and makes possible the use of the powerful methods of global analysis (\cite{AEM},\cite{Me}).

In \cite{AG}  and \cite{An}, another approach was introduced: using the canonical contact structure of the space of  null geodesics of a given Lorentzian manifold, one can parametrize in a suitable way codimension two sub\-mani\-folds which are orthogonal to a given congruence of null geodesics. This allows us to obtain local representation formulas for MT submanifolds in several  cases: the  Lorentzian space forms $\R^{n+2}_1, d\S^{n+2}$ and $Ad\S^{n+2},$ and the Lorentzian products $\S^{n+1} \times \R$ and $\H^{n+1} \times \R$ (\cite{AG}), as well as the pseudo-Riemannian spaces forms with higher signature (\cite{An}). In the case of surfaces ($n=2$), the formulas are explicit.

The next simplest Lorentzian manifolds are perhaps the Robertson-Walker spaces\footnote{They are also referred as to \em Friedmann-Lema\^itre-Robertson-Walker spaces\em.}. From the geometric viewpoint, these space are the Cartesian product of  the space forms $\R^{n+1}$, $\S^{n+1}$ and $\H^{n+1}$ by a real interval $I$, equipped with a Lorentzian metric which is a warped product.  Robertson-Walker spaces are physically relevant since they describe homogeneous, iso\-tro\-pic expanding/contracting universes.
The purpose of this paper is to address the study of MT submanifolds of Robertson-Walker spaces.

The methods used are analogous to that introduced in \cite{AG} and rely on the use of contact structure of  the
set of null (lightlike) geodesics of the ambient space. The congruence of null lines which are normal to a submanifold of codimension two is a Legendrian submanifold with respect to this contact structure. Conversely, given a null line congruence ${\cal L}$ which is Legendrian, there exists an infinite-dimensional family of submanifolds, parametrized by the set of real maps $\tau \in C^2({\cal L})$,  such that the congruence is normal to them. The function $\tau$ is nothing but the height function of the corresponding submanifold.
In Minkowski space, null geodesics are simply those straight lines supported by a null vector. Since any such straight line crosses exactly once any horizontal hyperplane and since any null vector $\overline{\nu} \in T\R^{n+2}$ may be normalized to the form $(\nu, 1), \nu \in \S^n,$ there is a natural identification between the space of null geodesics of $\R^{n+2}$ with $\R^{n+1}\times \S^n$, the unit tangent bundle of $\R^{n+1}$. If we replace the canonical Lorentzian metric of $\R^{n+2}$ by a warped product, null geodesics are no longer straight lines, but nevertheless their projections on the horizontal hyperplane $\R^{n+1}$ are still  straight lines. It is therefore possible to adapt the method of \cite{AG}: to consider a Legendrian $n$-parameter family of null geodesics amounts, locally, to consider a hypersurface of $\R^{n+1}$. Given such an immersed hypersurface $\varphi: \M \to \R^{n+1}$, it remains to find the condition on the height function $\tau \in C^2(\M)$ insuring that the integral submanifold corresponding to
the pair $(\varphi, \tau)$ is MT. In the flat case treated in \cite{AG}, this condition is polynomial in $\tau$, while in the Robertson-Walker case,   $\tau$ must be solution of a non polynomial algebraic equation (Theorem~\ref{mainT}). It turns out that, making some completeness assumption on the ambient space and using the mean value theorem, we are able to prove the existence of local solutions to this equation (Corollary \ref{sol}). We also discuss the special cases of  curves with null acceleration in three-dimensional spaces Robertson-Walker spaces (Corollary \ref{curves}), as well as that  of  submanifolds with null second fundamental form (Theorem \ref{Tnull2ff}).

\medskip

We thank the referee for improving the first version of this paper, notably pointing out the local nature of Lemma \ref{l2}.

\section{Notations and statement of  results}
Given $c \in \{ -1,0,1\}$, let $\q$ be the  $(n+1)$-dimensional space form of curvature $c$, i.e.\ the unique (up to isometry) simply connected, Riemannian manifold of constant sectional curvature $c$. Explicitly:

\begin{itemize}
\item[---]
 $\mathbb Q^{n+1}_0:=\R^{n+1}$ is the Euclidean space endowed with its canonical metric
$$\<\cdot,\cdot\>_0:= dx_1^2 + \cdots + dx_{n+1}^2;$$
\item[---]
$\mathbb Q^{n+1}_1:=\S^{n+1} = \{x\in\R^{n+2} \, | \, \<x,x\>_0 = 1\}$
is the unit sphere;
\item[---] 
$\mathbb Q^{n+1}_{-1}:=\H^{n+1} = \{x\in\R_1^{n+2} \, | \, \<x,x\>_1 = -1, \, x_{n+2} >0\}$,
where 
$$\<\cdot,\cdot\>_1 := dx_1^2 + \cdots + dx_{n+1}^2 - dx_{n+2}^2,$$ is the hyperbolic space.
\end{itemize}
The metrics of the ambient spaces $\R^{n+2}$ and $\R_1^{n+2}$ induce Riemannian metrics on the sphere and hyperbolic space respectively. For simplicity, we will denote by $\<\cdot,\cdot\>$ both the metrics of $\q$ and of its ambient space.
We define the real functions $\co$ and $\si$ in such a way that the curve $\ga(t) = \co (t) p + \si (t)v$ is the unique geodesic of $\q$ with initial conditions $(\ga(0),\ga'(0)) =( p,v) \in T\q.$ Explicitly, we have:

$$ (\co(t), \si (t)) = \left\{ \begin{array}{ll}  
     (\cos (t),\sin (t)) & \mbox{ if } c=1;\\
  (1,t)  & \mbox{ if } c= 0; \\ 
     (\cosh (t),\sinh (t)) & \mbox{ if } c=-1.
\end{array} \right.$$
Note that $\coca(t) + c \, \sica(t) = 1$
and that the following derivation formulas hold:
$\co(t)' = - c\,  \si(t)$ and
$\si(t)' = \co(t)$.
It is also convenient to introduce the following "cotangent" function:
\begin{eqnarray*}
 \ct(t)=\frac{\co(t)}{\si(t)} := \left\{ \begin{array}{ll}  
                                 \cot (t) & \mbox{ if } c=1;\\
 t^{-1} & \mbox{ if } c=0; \\
                         \coth (t) & \mbox{ if } c=-1.
\end{array} \right.
\end{eqnarray*}
We now introduce the \em Gauss map \em of an immersed hypersurface of $\q$:
let $\M$ be an $n$-dimensional, oriented manifold and $\varphi: \M \to \q$ a smooth immersion. Let $(X_1, \dots,X_n)$ be a local, oriented, tangent frame along $\M$.
The \em Gauss map \em of $\varphi$ is the unique map $\nu $ such that:
\begin{itemize}
\item[$(i)$]
$\<d\varphi(X_i), \nu\>$ vanishes $\forall i, \,  1 \leq i \leq n$;
\item[$(ii)$] the frame $(d\varphi(X_1), \dots, d\varphi(X_n), \nu)$ is positively oriented; 
\item[$(iii)$]$  \<\nu, \nu\>=1.$
\end{itemize}
Observe that if $c=0$ or $c=1$, $\nu$ is $\q$-valued, while in the hyperbolic case $c=-1$,  it takes values in the de Sitter space
$d\S^{n+1} := \{x\in\R_1^{n+2} \, | \, \<x,x\>_1 =~1\}$.

We are now in position to state our  results. 
Let $I$ be an open interval (bounded or not) and $w$ be a smooth real function on $I$ with $w >0.$
We denote by  $\q \times_w I$ the product space $\q \times I$ endowed with the warped metric
$$ \<\cdot,\cdot\>_w := w(t)^2 \<\cdot,\cdot\> - dt^2.$$  It is convenient to introduce the real function $\te(t):= \int_{t_0}^t \frac{ds}{w(s)}$.
A detailed study of the metric properties of $\q \times_w I$ can be found in \cite{ON}.

 We first discuss the case of those codimension two submanifolds of $\q \times~I$ whose second fundamental form $h$ is null:

\begin{theo} \label{Tnull2ff}
  Let $\s$ be a connected, codimension two submanifold of $\q \times_w~I$ with dimension $n>1$ and null second fundamental form. Then:
\begin{itemize}
\item[---] either $\s= {\cal Q}\times \{T\}$, $T\in I$,  where ${\cal Q}$ is a totally umbilic or totally geodesic hypersurface of $\q$ with curvature $w'(T)$;
\item[---] or the function \em $ \Omega(t):=\frac{\te'' \ct(\te)+ c \,  (\te')^2 }{ \te''  -(\te')^2 \ct(\te)}$ \em (which depends only on the warped function $w$) is constant and $\s$
 is locally congruent to an immersion of the form \em
$$\overline{\varphi}=(\co(\te\circ \tau) \varphi + \si(\te\circ\tau) \nu,\tau),$$
\em
where $\varphi :\M \to \q$ is a totally umbilic (or totally geodesic) immersion with Gauss map $\nu$, with curvature $C_0:= \Omega(t)$, and $\tau$ is an arbitrary real map in $ C^2(\M)$.
\end{itemize}
\end{theo}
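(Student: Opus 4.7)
The proof naturally splits into two cases according to whether the height function $\tau$ of $\s$ in the $I$-factor is locally constant.

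In the first case, $\s$ sits inside a single slice $\q \times \{T\}$ and is a hypersurface ${\cal Q}$ of $\q$. A direct application of the Koszul formula to the warped metric $\<\cdot,\cdot\>_w = w^2\<\cdot,\cdot\> - dt^2$ yields
\[
h(X,Y) = h^\nu_{{\cal Q}}(X,Y)\,\nu + w(T)w'(T)\,\<X,Y\>\,\partial_t,
\]
where $h^\nu_{{\cal Q}}$ is the second fundamental form of ${\cal Q}$ in $\q$ along its Gauss map $\nu$, and the $\partial_t$-term comes from the umbilical bending of the slice in the ambient warped product. The normal bundle of $\s$ has Lorentzian signature $(1,1)$ with null directions $\nu \pm w(T)\partial_t$, so the image of $h$ lies in a null line if and only if $h^\nu_{{\cal Q}}(X,Y) = \pm w'(T)\<X,Y\>$, i.e.\ ${\cal Q}$ is totally umbilic (or geodesic) with principal curvature $w'(T)$. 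This yields the first alternative of the theorem.

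Assume now that $\tau$ is non-constant. Since null $h$ implies null $\vec H$, $\s$ is marginally trapped and Theorem~\ref{mainT} provides the local parametrization $\overline{\varphi} = (\co(\te\circ\tau)\,\varphi + \si(\te\circ\tau)\,\nu,\,\tau)$, with $\varphi : \M \to \q$ an immersed hypersurface with Gauss map $\nu$. The task is to upgrade the scalar equation on $\tau$ encoding that $\vec H$ is null to the tensorial condition that all values of $h$ lie in a common null line of the normal bundle. To do so, I compute the ambient second covariant derivatives of $\overline{\varphi}$ using the identities $\co' = -c\,\si$, $\si' = \co$, $\te'(t) = 1/w(t)$ and the warped-product Christoffel symbols, working in a tangent frame $(X_1,\dots,X_n)$ that diagonalises the Weingarten operator of $\varphi$ with principal curvatures $k_1,\dots,k_n$. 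The normal part of these derivatives splits along $\partial_t$ and along the ``rotated'' ambient vector $-\si(\te\circ\tau)\,\varphi + \co(\te\circ\tau)\,\nu$, with coefficients involving the $k_i$, the partial derivatives of $\tau$, and $w,w'$ evaluated at $\tau$.

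The main step, and the main obstacle, is to extract from the off-diagonal entries $h(X_i,X_j)$, $i\ne j$, the pointwise conclusion $k_i = k_j$: testing against a null normal transverse to the common null line of $h$ produces equations that decouple, for each pair $i\ne j$, into a relation involving only $k_i - k_j$. Hence $\varphi$ is totally umbilic (or totally geodesic) with a single principal curvature $C_0$; and since umbilic hypersurfaces of a space form have constant principal curvature, $C_0$ is a constant. The diagonal equations then collapse to the pointwise identity $C_0 = \Omega(\tau)$, free of derivatives of $\tau$. Because $n > 1$ and $\tau$ is non-constant, the image $\tau(\M)$ contains an open subinterval of $I$ on which $\Omega \equiv C_0$. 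Conversely, when $\Omega \equiv C_0$ is constant and $\varphi$ is an umbilic immersion of curvature $C_0$, the same expansion shows that $h$ is automatically null for \emph{every} $\tau \in C^2(\M)$, which accounts for the full freedom of $\tau$ in the statement.
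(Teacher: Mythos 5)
Your overall architecture --- splitting on whether $\tau$ is locally constant, handling the slice case by a direct computation of $h$ in the warped product, and otherwise feeding the parametrization of Theorem~\ref{mainT} into a computation of the second fundamental form --- is sound, and your final conclusions (umbilicity of $\varphi$, the pointwise identity $\ka_i=\Omega\circ\tau=C_0$, constancy of $\Omega$ on $\tau(\M)$, the curvature $w'(T)$ in the slice case, and the converse freedom of $\tau$) all agree with the paper. The slice case as you present it is correct and even more direct than the paper's route through Corollary~\ref{slice}.

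However, the step you yourself flag as ``the main step, and the main obstacle'' is wrong as described, and carried out literally it would fail. Writing $h=a\,\overline{\nu}+b\,\overline{\nu}^{*}$ in a null normal frame, the condition that the image of $h$ lies in the null line spanned by $\overline{\nu}$ is $\<h,\overline{\nu}\>_w=0$: pairing with $\overline{\nu}$ \emph{itself} isolates the transverse coefficient $b$, whereas pairing with a null normal \emph{transverse} to that line (as you propose) extracts the unconstrained coefficient $a$ and yields no equation at all. More importantly, Lemma~\ref{lemm:metric} of the paper shows that in a frame diagonalizing the shape operator of $\varphi$ the quantity $\<\overline{D}_{\overline{E}_i}\overline{\nu},\overline{E}_j\>_w$ is already proportional to $\delta_{ij}$, for \emph{any} $\varphi$ and $\tau$; the off-diagonal entries vanish identically and therefore cannot produce the relations $\ka_i-\ka_j=0$ on which you base umbilicity. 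Umbilicity comes instead from the diagonal entries: each factors as $(\co(\te)-\ka_i\si(\te))\bigl(\ka_i\co(\te)+c\,\si(\te)-w'(\co(\te)-\ka_i\si(\te))\bigr)=0$, the first factor is nonzero because $\overline{\varphi}$ is an immersion, and hence every $\ka_i$ equals the same $i$-independent quantity $\Omega\circ\tau$. So the conclusion you want is true --- and in fact easier to reach than you anticipated --- but the mechanism you propose is not the one that works, which indicates the computation was not actually performed; to complete the proof you should replace the off-diagonal argument by the diagonal one above (or simply quote Lemma~\ref{lemm:metric}).
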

The next theorem provides a local characterization of codimension two MT submanifolds:

\begin{theo} \label{mainT}
Let $\varphi: \M \to \q$ be an immersed hypersurface of class $C^4$ with Gauss map $\nu$. Denote  by  $ \ka_1 , \ldots, \ka_p, \, p \geq 2$  the $p$ distinct principal curvatures with multiplicity $m_i$  of $\varphi$.
Let $\tau \in C^2(\M)$. Then the map $\overline{\varphi} : \M \to \q \times_w I$ defined
by \em 
$$  \overline{\varphi}:=(\co(\te\circ \tau) \varphi + \si (\te\circ \tau) \nu,\tau)$$
 \em is a MT immersion if and only if \em
\begin{equation} n \, \frac{dw}{dt}\circ \tau - \sum_{i=1}^p m_i  \frac{\ka_i \, \ct(\te \circ \tau)+ c }{\ct(\te \circ \tau)-\ka_i } = 0.
\label{main}
\end{equation}
\em Conversely, any MT, codimension two submanifold of $\q \times_w~I$ is locally congruent to the image of such an immersion. 

\end{theo}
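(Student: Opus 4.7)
The plan is to verify the biconditional by direct computation in a principal frame of $\varphi$, and to derive the converse from the contact-geometric construction described in the introduction.

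\textbf{Setup and induced metric.} First I would work locally and select a frame $(X_1,\ldots,X_n)$ on $\M$ such that $\{d\varphi(X_i)\}$ is $\q$-orthonormal and $d\nu(X_i) = -\ka_i\,d\varphi(X_i)$. Writing $\rho := \te\circ\tau$ and $\tilde{\nu} := -c\,\si(\rho)\varphi + \co(\rho)\nu$, the derivation rules for $\co,\si$ together with $\te'(\tau) = 1/w(\tau)$ yield
$$ d\overline{\varphi}(X_i) = \Big([\co(\rho) - \ka_i\,\si(\rho)]\,d\varphi(X_i) + \tfrac{d\tau(X_i)}{w(\tau)}\,\tilde{\nu},\; d\tau(X_i)\Big). $$
Substituting into the warped metric, the $\tilde{\nu}$-term and the vertical $\pa_t$-contribution cancel, leading to the diagonal form $\overline{g}_{ij} = w(\tau)^2\,[\co(\rho) - \ka_i\si(\rho)]^2\,\delta_{ij}$; in particular $\overline\varphi$ is a spacelike immersion iff $\ka_i \neq \ct(\rho)$ for every $i$. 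A direct check also shows that $\ell_+ := (\tilde\nu,\,w(\tau))$ is a nowhere-vanishing null normal field along $\overline\varphi$, tangent to the null geodesics of the congruence underlying the construction.

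\textbf{Mean curvature and the equation \eqref{main}.} The normal bundle has Lorentzian signature $(1,1)$, so the MT condition amounts, up to replacing $\nu$ by $-\nu$, to $\<\vec H,\ell_+\>_w = 0$. The plan is to compute
$$ n\,\<\vec H,\ell_+\>_w \;=\; \sum_{i=1}^n \frac{\<\overline\nabla_{X_i} d\overline\varphi(X_i),\ell_+\>_w}{\overline{g}_{ii}} $$
via the warped-product Levi-Civita formulas $\overline\nabla_{\pa_t} V = (w'/w)V$ and $\overline\nabla_V W = \nabla^\q_V W + (w'/w)\<V,W\>\pa_t$ for horizontal $V,W$, combined with $d\nu(X_i) = -\ka_i d\varphi(X_i)$. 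The expected outcome is that each summand reduces to $w'(\tau) - \tfrac{\ka_i\,\ct(\rho) + c}{\ct(\rho) - \ka_i}$; after grouping the $n$ directions according to the distinct principal curvatures $\ka_1,\ldots,\ka_p$ with multiplicities $m_1,\ldots,m_p$, this should give a nonzero multiple of the left-hand side of \eqref{main}. The main obstacle will be managing the covariant derivatives so that the contributions from $d\tau(X_i)$, from $\nabla^\q_{X_i} d\varphi(X_i)$, and from the derivatives of $\co,\si$ all collapse into that clean rational expression; the diagonal form of $\overline g$ in the principal frame and the simple shape of $\ell_+$ are what will make those cancellations tractable.

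\textbf{Converse.} For the converse, let $\s$ be any codimension two MT submanifold. Its mean curvature vector, being everywhere null, determines a smooth null normal line field along $\s$, and the null geodesics of $\q\times_w I$ tangent to this line field form a Legendrian null congruence normal to $\s$. Since null geodesics of $\q\times_w I$ project onto geodesics of $\q$ and meet each horizontal slice $\q\times\{t\}$ transversally, this congruence is locally parametrized by an immersed hypersurface $\varphi:\M\to\q$ with Gauss map $\nu$; the $I$-coordinate along $\s$ then plays the role of the height function $\tau\in C^2(\M)$, and reading off the intersection of each null geodesic of the congruence with $\s$ reproduces the formula $\overline\varphi = (\co(\te\circ\tau)\varphi + \si(\te\circ\tau)\nu,\tau)$.
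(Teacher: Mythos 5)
Your forward direction follows essentially the same route as the paper: a principal $g$-orthonormal frame, the identity $d\psi(e_i)=(\co(\te)-\ka_i\si(\te))\,d\varphi(e_i)+w^{-1}d\tau(e_i)\chi$, the resulting diagonal induced metric $w^2(\co(\te)-\ka_i\si(\te))^2\delta_{ij}$, and the warped-product connection formulas to pair the second fundamental form with the null normal $(\chi,w\circ\tau)$. The claimed per-direction summand $w'-\frac{\ka_i\ct(\te)+c}{\ct(\te)-\ka_i}$ is exactly what the paper's Lemma \ref{lemm:metric} produces, so the computation, though only sketched, is a correct plan and would close. (One point you share with the paper and should at least flag: equation \eqref{main} characterizes $\vec H$ being collinear to the \emph{chosen} null normal $\ell_+$; MT a priori also allows collinearity with the other null direction, which is absorbed by the freedom in choosing $\overline{\nu}$.)

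The genuine gap is in the converse. You assert that the Legendrian null congruence normal to $\s$ ``is locally parametrized by an immersed hypersurface $\varphi:\M\to\q$ with Gauss map $\nu$,'' deducing this from transversality of the null geodesics to the slices. That inference does not hold: transversality to $\q\times\{t\}$ controls the geodesics, not the rank of the map $x\mapsto\varphi(x)$, and the ``wavefront'' obtained by intersecting the congruence with a fixed level can be singular (focal/caustic points), in which case $\varphi$ is not an immersion and $\nu$ is not a Gauss map. This is precisely what the paper's Lemma \ref{l2} repairs: one first shows (using spacelikeness of $\overline{\varphi}$) that the pair $(\varphi,\nu)$ is an immersion into $T\q$, then observes that at each point the set of parameters $t$ for which $\co(t)\,d\varphi+\si(t)\,d\nu$ drops rank is finite, so a small generic vertical translation $\tau\mapsto\tau+t_0$ replaces $\varphi$ by an immersion $\co(t_0)\varphi+\si(t_0)\nu$ whose Gauss map is $\co(t_0)\nu+c\,\si(t_0)\varphi$. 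Without this step (or an equivalent one), your converse only produces the representation $\overline{\varphi}=(\co(\te\circ\tau)\varphi+\si(\te\circ\tau)\nu,\tau)$ for a map $\varphi$ that may fail to be a hypersurface, and the statement of the theorem is not recovered. You also leave implicit why $\te$ must be $\int dt/w$ in the converse; in the paper this is forced by requiring $\<d\varphi,\nu\>=0$.
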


\noindent If the warped factor $w$ is constant, we are in the case of a Cartesian product $ \q \times \R$ and we recover the results of \cite{AG} concerning the Minkowski space $\R^{n+2}_1= \R^{n+1} \times \R$ and the product spaces $\S^{n+1}\times \R$ and $\H^{n+1}\times \R$.  On the other hand, if the image of the immersion $\overline{\varphi}$ is contained  in a time slice $\{ t = T \},$ we recover one of the main results of \cite{FHO}:

\begin{coro}  \label{slice}
A codimension two submanifold $\s={\cal Q}\times \{T\}$ contained in a time slice $\{ t =T \}$   is MT if and only if the hypersurface ${\cal Q} \subset \q$ has constant mean curvature $w'(T)$.
\end{coro}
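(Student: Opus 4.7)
The strategy is to specialize Theorem~\ref{mainT} to a constant height function $\tau\equiv T$. Setting $s_0:=\te(T)$, the $\q$-component of the parametrization $\overline{\varphi}=(\co(\te\circ\tau)\varphi + \si(\te\circ\tau)\nu,\tau)$ reduces to $\varphi_{s_0}:=\co(s_0)\varphi + \si(s_0)\nu$, which is precisely the hypersurface of $\q$ parallel to $\varphi(\M)$ at signed distance $s_0$; this is how we recover a local parametrization of ${\cal Q}$. The exceptional value $s_0=0$ can always be avoided by choosing the reference point $t_0$ in $\te(t)=\int_{t_0}^{t}ds/w(s)$ distinct from $T$.

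The computational core is the classical parallel-hypersurface formula in the space form $\q$. Differentiating $\varphi_{s_0}$ along a principal direction $X_i$ and using $d\nu(X_i)=-\ka_i\, d\varphi(X_i)$ gives $d\varphi_{s_0}(X_i)=(\co(s_0)-\ka_i\si(s_0))\, d\varphi(X_i)$. One then checks that $\nu_{s_0}:=-c\si(s_0)\varphi + \co(s_0)\nu$ is a unit vector orthogonal to each $d\varphi_{s_0}(X_i)$, hence is the Gauss map of $\varphi_{s_0}$; differentiating $\nu_{s_0}$ along $X_i$ and comparing yields the principal curvatures of the parallel hypersurface
\begin{equation*}
\tilde{\ka}_i \;=\; \frac{\ka_i\co(s_0)+c\,\si(s_0)}{\co(s_0)-\ka_i\si(s_0)} \;=\; \frac{\ka_i\,\ct(s_0)+c}{\ct(s_0)-\ka_i},
\end{equation*}
with preserved multiplicities $m_i$. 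Consequently, the sum on the left of (\ref{main}) equals $\sum_i m_i\tilde{\ka}_i = nH$, where $H$ denotes the mean curvature of ${\cal Q}$, and equation (\ref{main}) simply reads $H=w'(T)$.

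Both directions of the corollary now follow at once. If $\s={\cal Q}\times\{T\}$ is MT, Theorem~\ref{mainT} provides a local parametrization of the above form, so the identity $H=w'(T)$ holds pointwise on ${\cal Q}$, proving that ${\cal Q}$ has constant mean curvature equal to $w'(T)$. Conversely, given ${\cal Q}$ with constant mean curvature $w'(T)$, we fit $\s$ into the form of Theorem~\ref{mainT} by taking $\varphi$ to parametrize the parallel hypersurface of ${\cal Q}$ at distance $-s_0$ (well defined off the focal set) and $\tau\equiv T$; the CMC hypothesis is then exactly (\ref{main}). The only real obstacle is the parallel-hypersurface formula above, which is a standard computation in space forms; once in hand, the identification of (\ref{main}) with $n\bigl(w'(T)-H\bigr)=0$ makes the corollary transparent.
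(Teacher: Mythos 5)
Your proposal is correct and follows essentially the same route as the paper: specialize Equation (\ref{main}) to constant $\tau$, identify the $\q$-component with the equidistant (parallel) hypersurface of $\varphi(\M)$, and use the standard formula $\tilde{\ka}_i=\frac{\ka_i\,\ct+c}{\ct-\ka_i}$ for its principal curvatures to read (\ref{main}) as $H=w'(T)$. You merely spell out the parallel-hypersurface computation and the converse direction slightly more explicitly than the paper does.
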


In general, Equation (\ref{main}) cannot be solved explicitely. However, if $n=1$, i.e.\ for curves with null acceleration, a more precise statement may be given:

\begin{coro} \label{curves}
A regular, non null curve of $\mathbb Q^2_c \times_w~I$ with null acceleration takes the form
\em
$$\overline{\ga}=(\co(\te\circ \tau) \ga+ \si(\te\circ\tau) \nu,\tau),$$
\em
where $\ga$ is a regular curve of $\mathbb Q^2_c$ with unit normal vector $\nu$, $\tau \in C^2(\R)$ and the curvature $\ka$ of $\ga$ with respect to $\nu$ satisfies: \em
$$ \ka = \left( \frac{w'  \ct(\te)- c  }{w' + \ct (\te)  }\right) \circ \tau.$$ \em
\end{coro}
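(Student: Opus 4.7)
The plan is to derive Corollary \ref{curves} as a direct specialization of Theorem \ref{mainT} to the case $n=1$. First I would note that in the three-dimensional ambient space $\mathbb{Q}^2_c \times_w I$, a codimension two submanifold is simply a curve, and for a regular non-null curve the mean curvature vector coincides, up to a nonzero scalar, with the normal component of the acceleration $\overline{\ga}''$; in an arc-length parametrization (which exists because the curve is non-null), $\overline{\ga}''$ is itself normal. Hence the ``null acceleration'' condition in the statement is exactly the MT condition, and Theorem \ref{mainT} becomes applicable.

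Next I specialize the theorem by setting $n=1$. The ``immersed hypersurface'' $\varphi: \M \to \mathbb{Q}^2_c$ is then a regular curve $\ga$ in the two-dimensional space form, its Gauss map $\nu$ is the usual unit normal, and there is a single principal curvature $\ka$ of multiplicity one. The parametrization $\overline{\ga}=(\co(\te\circ\tau)\ga+\si(\te\circ\tau)\nu,\tau)$ in the corollary is exactly the one supplied by Theorem \ref{mainT}. Although the statement of that theorem imposes $p \geq 2$, its derivation of equation (\ref{main}) is unaffected when $p=1$, since the sum simply collapses to a single term. Equation (\ref{main}) therefore reads
\begin{equation*}
w'(\tau) \; - \; \frac{\ka\,\ct(\te\circ\tau)+c}{\ct(\te\circ\tau)-\ka} \; = \; 0.
\end{equation*}

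The remaining step is purely algebraic: the displayed identity is linear in $\ka$, so clearing the denominator and collecting terms yields
\begin{equation*}
\ka \; = \; \frac{w'(\tau)\,\ct(\te\circ\tau) - c}{w'(\tau) + \ct(\te\circ\tau)} \; = \; \left(\frac{w'\,\ct(\te) - c}{w' + \ct(\te)}\right)\circ\tau,
\end{equation*}
which is precisely the formula in the statement.

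The main substantive point to justify is the equivalence between MT and null acceleration for a non-null curve in a Lorentzian three-manifold, which reduces to the elementary observation above about the arc-length parametrization. Everything else is a mechanical specialization of Theorem \ref{mainT}, with the only minor bookkeeping being the verification that the derivation of (\ref{main}) still holds in the $p=1$ case relevant here.
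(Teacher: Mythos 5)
Your proposal is correct and follows essentially the same route as the paper: both specialize Equation (\ref{main}) to $n=1$ (a single curvature term), identify the null-acceleration condition with the MT condition for a non-null curve, and solve the resulting relation linearly for $\ka$. The extra care you take over the arc-length parametrization and the $p\geq 2$ hypothesis is sound but not a different argument.
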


In higher dimension, under a natural assumption of geodesic completeness, we are able to prove the existence of local solutions to Equation ({\ref{main}), in terms of an arbitrary hypersurface of $\q$:

\begin{coro} \label{sol}
Assume that $\q \times_w I$ is future- or past-null geodesically complete, i.e.\ $\int_I \frac{dt}{w(t)}=+ \infty$.
Let $\varphi: \M \to \q$ be an immersed hypersurface of class $C^4$ with Gauss map $\nu$, whose principal curvatures $\ka_i$, $1 \leq i \leq p$, have constant multiplicity.
Let $q$ be the integer number defined by
\begin{eqnarray*}
 q := \left\{ \begin{array}{ll}  
                        p-1 \quad & \mbox{ if } c=1;\\
  \#\{ \ka_i | \, \,  \ka_i \neq 0\} -1&  \mbox{ if } c=0;\\
 \#\{ \ka_i | \,  \,  |\ka_i| > 1\}-2  & \mbox{ if } c=-1.
\end{array} \right.
\end{eqnarray*}
Then for all point $ x$  in  $\M$, 
there exists a neighbourhood $U$ of $x$ in $\M$ and $q$ solutions $\tau_i \in C^2(U)$ of
Equation (\ref{main}), giving rise to $q$ MT immersions $\overline{\varphi}_i : U \to \q \times_w I$ of the form \em
$$ \overline{\varphi}_i = (\co (\te\circ\tau_i) \varphi + \si (\te\circ\tau_i) \nu , \tau_i).$$
\em
\end{coro}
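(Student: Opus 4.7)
The strategy is to fix $x_0 \in \M$, exhibit $q$ transverse zeros of the left-hand side of (\ref{main}) viewed as a function of $\tau \in I$, and then extend each such zero to a local $C^2$ solution $\tau_i: U \to I$ using the implicit function theorem. By Theorem~\ref{mainT}, each $\tau_i$ will produce the desired MT immersion $\overline{\varphi}_i$.

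The first step is to recast the equation at $x_0$ as the vanishing of $\Phi_{x_0}(\tau) := n w'(\tau) - R_{x_0}(\ct(\te(\tau)))$, where the rational function $R_{x_0}(u) := \sum_{i=1}^p m_i (\ka_i u + c)/(u - \ka_i)$ depends only on the principal curvatures at $x_0$. A direct computation shows that $R_{x_0}$ has simple poles at $u = \ka_i$ with residue $m_i(\ka_i^2 + c)$: the pole is removable precisely when $c = -1$ and $|\ka_i| = 1$, has \emph{positive} residue when $\ka_i^2 + c > 0$, and has negative residue otherwise. Near a positive-residue pole, $R_{x_0} \to +\infty$ as $u \to \ka_i^+$ and $R_{x_0} \to -\infty$ as $u \to \ka_i^-$. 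Counting positive-residue poles yields exactly $p$, $\#\{\ka_i \neq 0\}$, and $\#\{|\ka_i| > 1\}$ for $c = 1, 0, -1$ respectively.

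The second step invokes completeness. The hypothesis $\int_I dt/w(t) = +\infty$ implies that $\te(I)$ has infinite length, so the image of $u = \ct(\te(\tau))$ as $\tau$ varies in $I$ fills all of $\R$ for $c = 1$, a dense open subset of $\R \setminus \{0\}$ for $c = 0$, and cofinal subsets of $(1, +\infty)$ and $(-\infty, -1)$ for $c = -1$. In each case every positive-residue pole of $R_{x_0}$ belongs to this image. Ordering the positive-residue poles within each connected component of the accessible $u$-range, the function $R_{x_0}$ passes continuously from $+\infty$ to $-\infty$ between any two consecutive such poles, so $\Phi_{x_0}$ changes sign from $-\infty$ to $+\infty$ on the corresponding $\tau$-interval. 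The intermediate value theorem thus produces at least one zero per interval, and a careful count of these intervals (splitting $c = -1$ into contributions from $(1,+\infty)$ and $(-\infty,-1)$, each yielding one fewer zero than positive poles there) gives exactly $q$ distinct zeros $\tau_1^*, \ldots, \tau_q^*$.

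Finally I would apply the implicit function theorem at each zero. The constant-multiplicity hypothesis guarantees that the $\ka_j$, hence $F(\tau,x) := n w'(\tau) - \sum m_j(\ka_j(x)\ct(\te(\tau)) + c)/(\ct(\te(\tau)) - \ka_j(x))$, depend $C^2$-smoothly on $x$ near $x_0$; provided $\partial_\tau F(\tau_i^*, x_0) \neq 0$, the IFT yields the desired $C^2$ map $\tau_i$ on some neighbourhood $U$ of $x_0$. The main obstacle will be guaranteeing this transversality at each zero: for $c = 0, 1$ it is automatic since the derivative $R_{x_0}'(u) = -\sum m_i(\ka_i^2 + c)/(u - \ka_i)^2$ is strictly negative on each sign-change interval, forcing $R_{x_0}$ to be strictly monotone there and hence every zero of $\Phi_{x_0}$ to be transverse. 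For $c = -1$, $R_{x_0}'$ is not one-signed, and I would argue instead that a zero can always be chosen close enough to one of the two asymptotic endpoints of the sign-change interval, where the dominant contribution of the nearby positive pole restores local monotonicity of $R_{x_0}$ and therefore transversality of $\Phi_{x_0}$.
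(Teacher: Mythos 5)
Your overall strategy coincides with the paper's: produce sign changes of the left-hand side of (\ref{main}) between consecutive principal curvatures by the intermediate value theorem, then propagate each zero to a $C^2$ function of $x$ by the implicit function theorem. Your residue analysis at the poles $u=\ka_i$ is exactly the paper's computation $G(\te_1^i)G(\te_2^i)<0$ before clearing denominators: the paper's endpoint value $G(\te_1^i)=m_i(\ka_i^2+c)\prod_{j\neq i}(\ka_i-\ka_j)$ is your residue $m_i(\ka_i^2+c)$ times the surviving factors. Two of your steps, however, do not go through as written.

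First, your count is off by one for $c=0$ whenever the nonzero principal curvatures do not all have the same sign. The rule ``one fewer zero than the number of positive-residue poles in each connected component of the accessible $u$-range'' gives $(r-1)+(m-r-1)=m-2$ when $r$ of the $m=\#\{\ka_i\neq 0\}$ nonzero curvatures are negative and $m-r$ are positive, whereas $q=m-1$. The missing zero does not lie between two consecutive curvatures on the $u$-line at all: it lies on the arc joining the largest and the most negative nonzero curvature \emph{through} $u=\infty$, which is inaccessible if you insist on working componentwise in $u$. The clean bookkeeping is in the variable $s=\te(\tau)$: for $c=0$ one has $u=1/s$ and the sum in (\ref{main}) becomes $-\sum_{\ka_i\neq 0} m_i\,(s-1/\ka_i)^{-1}$, a function with exactly $m$ poles on the $s$-line and hence $m-1$ bounded gaps, one of which contains $s=0$ (there $u=\infty$ is a removable singularity of your $R_{x_0}$, with finite value $\sum m_i\ka_i$); the intermediate value theorem on that gap supplies the $(m-1)$-st solution. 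This is essentially what the paper's pairing of consecutive nonzero curvatures is meant to capture.

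Second, your transversality argument fails already for $c=0,1$: the function $\Phi_{x_0}(\tau)=nw'(\tau)-R_{x_0}(\ct(\te(\tau)))$ contains the non-constant term $nw'(\tau)$, so strict monotonicity of $R_{x_0}\circ\ct\circ\te$ does not make $\Phi_{x_0}$ monotone, and a sign-changing $C^2$ function may have only degenerate zeros (e.g.\ $\tau\mapsto\tau^3$), at which the implicit function theorem gives nothing. The proposed remedy for $c=-1$ has the same defect: local monotonicity of $\Phi_{x_0}$ near an asymptotic endpoint does not guarantee that a zero lies in that region. To be fair, the paper's own proof invokes the implicit function theorem without verifying $\partial_s G\neq 0$ at the chosen zero, so the difficulty is shared; but since you explicitly raise the issue, you cannot then dispose of it by declaring it ``automatic''.
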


\section{The local geometry of codimension two, spacelike submanifolds of $\q \times_w I$}

Let $\overline{\varphi}=(\psi,\tau)$ be an immersion of a $n$-dimensional manifold $\M$ into $ \q \times_w~I $ which is spacelike,
i.e.\ the induced metric $\overline{g}:=\overline{\varphi}^*\<\cdot,\cdot\>_w$ is definite positive.

We denote by $\overline{\nu}$ one of the two globally defined null normal vector fields along $\overline{\varphi}$, normalized   as follows: $ \overline{\nu}=(\chi,w \circ\tau),$ with $\<\chi ,\chi \>=1$.
In particular $\<d\overline{\varphi},\overline{\nu}\>_w=0$, i.e.\
$$w^2 \<d\psi, \chi\> -w \circ \tau \,  d\tau=0,$$
which implies $\<d\psi,\chi\> =  w^{-1}d\tau.$

We introduce  the maps
\begin{eqnarray*}
\varphi &:=& \co (\te\circ\tau) \psi - \si(\te\circ\tau) \chi \label{definitions}\\
\nu &:=&c\,  \si(\te\circ\tau) \psi + \co (\te\circ\tau) \chi,
\end{eqnarray*}
where $\te$ is a function of the real variable $t$ that we shall determine later. Observe for the moment that 
if $c \neq 0$,  $\<\varphi, \nu\>$ vanishes. Hence, in all cases, $\nu \in T_\varphi \q.$
The following formulas will be useful later:
\begin{eqnarray*}
 \psi &=& \co(\te\circ\tau) \varphi + \si(\te\circ\tau) \nu \\
 \chi &=& -c \, \si(\te\circ\tau) \varphi + \co(\te\circ\tau) \nu
\end{eqnarray*}
and
\begin{equation}\label{eq:d-formulas}
\begin{split}
 d\varphi &= -\te' d\tau \nu + \co(\te) d\psi - \si(\te) d\chi  \\
 d\nu &= c \, \te' d\tau \, \varphi + c \, \si(\te) d\psi + \co(\te) d\chi  \\
 d\psi &= \te' d\tau \, \chi + \co(\te) d\varphi + \si(\te) d\nu.
\end{split}
\end{equation}

We now choose $\te$ in such a way that $\nu$ is the Gauss map of $\varphi,$ when $\varphi$ is an immersion.

In the following, the sign $'$ denotes the differential $\frac{d}{dt}$ on the real line $\R$
and when there is no risk of confusion, the composition $\te\circ\tau$ will be simply denoted by $\te$.

Taking into account that $d\varphi=-\te'd\tau \, \nu + \co(\te)d\psi - \si(\te)d\chi$,
we calculate
\begin{eqnarray*}
  \< d\varphi,\nu \> &=& -\te' d\tau \< \nu, \nu\> + \, \< \co(\te)d\psi - \si(\te)d\chi,  c\,  \si(\te) \psi + \co (\te) \chi \> \\
 &=& -\te' d\tau + \coca(\te) \<d\psi, \chi\> + c\, \co(\te)\si(\te) \<d\psi, \psi\> \\
 && \, \, -c \, \sica(\te) \<d\chi,\psi\>.
\end{eqnarray*}
In the flat case $c=0$, we get  $\<d\varphi, \nu\> = -\te' d\tau +  \<d\psi, \chi\>$. We claim that the same holds when $c\neq0$: first, since $\<\psi,\psi\>$ is constant, $\<d\psi,\psi\>$ vanishes. On the other hand,  
$\chi$ is tangent to $\q$ at $\psi$, so that $\<d\psi,\chi\>+\<\psi, d\chi\>$ vanishes as well. Hence we obtain
$$\< d\varphi,\nu \>=-\te' d\tau + ( \coca(\te) + c\, \sica(\te) ) \<d\psi, \chi\>.$$
  Hence we set $\te(t):= \int_{t_0}^t \frac{ds}{w(s)}$ and we deduce that $\<d\varphi,\nu\>=0$.


\begin{lemm}
 The map $(\varphi, \nu) : \M \to  T \q$ is an immersion. 
\end{lemm}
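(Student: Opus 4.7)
The plan is to prove injectivity of $d(\varphi,\nu)$ at every point by exploiting both the fact that $\overline{\varphi}=(\psi,\tau)$ is itself an immersion and the spacelike hypothesis that $\overline{g}$ is positive definite. Suppose $X \in T_x\M$ lies in the kernel, so that $d\varphi(X)=0$ and $d\nu(X)=0$.

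The first step is to read off what these two conditions say about $d\psi(X)$ by inverting the relations between $(d\varphi,d\nu)$ and $(d\psi,d\chi)$ contained in \eqref{eq:d-formulas}. Its third line is
$$d\psi = \te' d\tau \, \chi + \co(\te) d\varphi + \si(\te) d\nu,$$
which under the kernel hypothesis collapses to $d\psi(X) = \te'(\tau)\, d\tau(X)\, \chi$.

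Next comes a short case analysis on $d\tau(X)$. If $d\tau(X)=0$, then $d\psi(X)=0$ too, so $d\overline{\varphi}(X)=0$ and $X=0$ because $\overline{\varphi}$ is an immersion. If on the other hand $d\tau(X)\neq 0$, I would evaluate the induced metric on $X$:
$$\overline{g}(X,X) = w(\tau)^2 \<d\psi(X),d\psi(X)\> - d\tau(X)^2 = \bigl(w(\tau)\,\te'(\tau)\bigr)^2 d\tau(X)^2 \<\chi,\chi\> - d\tau(X)^2.$$
Since $\te$ was chosen so that $\te'(t)=1/w(t)$ and $\<\chi,\chi\>=1$ by construction, the right-hand side vanishes, contradicting the positive definiteness of $\overline{g}$. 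Thus $X=0$ in both cases and $(\varphi,\nu)$ is an immersion.

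The only non-automatic ingredient is the cancellation $w\,\te' \equiv 1$, which is precisely the identity that pinned down the choice of $\te$ in the preceding computation; beyond this bookkeeping I foresee no real obstacle.
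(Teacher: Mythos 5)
Your proof is correct and follows essentially the same route as the paper: both use the third relation of \eqref{eq:d-formulas} to show that a kernel vector $X$ forces $d\overline{\varphi}(X)=(\te'\chi,1)\,d\tau(X)$, and both derive a contradiction with the spacelike hypothesis from the fact that $(\te'\chi,1)$ is null (the paper handles your two cases in one stroke, since $\overline{g}(X,X)=0$ for $X\neq 0$ already violates positive definiteness whether or not $d\tau(X)$ vanishes). No gap; the cancellation $w\,\te'\equiv 1$ you flag is indeed exactly the normalization made when $\te$ was defined.
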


\begin{proof}
Suppose $(\varphi,\nu)$ is not an immersion, so that there exists a non-vanishing vector $v \in T\M$ such that $(d\varphi(v),d\nu(v))=(0,0).$
Using Equation \eqref{eq:d-formulas} we have:
\begin{eqnarray*}
   d \overline{\varphi}(v) &=&(d\psi(v), d\tau(v)) \\
&=& (\te' d\tau(v) \chi + \co(\te) d\varphi(v) + \si(\te) d\nu(v) , d\tau(v))\\
&=&  (\te' \chi, 1) d\tau(v) .
\end{eqnarray*}
Observe that $(\te' \chi, 1)$ is a null vector, so $d \overline{\varphi}(v)$ as well. This  contradicts the assumption that $\overline{\varphi}$ is spacelike.
\end{proof}

\begin{lemm} \label{l2} Given $x \in \M$ and $ \eps >0,$ there exists a neighbourhood $U$ of $x$ and $t_0 \in (-\eps, \eps)$ such that \em$\co(t_0) \varphi + \si(t_0) \nu$ \em is an immersion of $U$, and
 \em $\co(t_0) \nu + c \,  \si(t_0) \varphi$ \em is its Gauss map\footnote{This corresponds to the fact that the immersion $(\varphi,\nu)$ is \em Legendrian \em with respect to the canonical contact structure of the unit tangent bundle of $\q$.}.
  \end{lemm}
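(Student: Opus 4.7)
The plan is to produce $t_0\in(-\eps,\eps)$ such that $\psi_{t_0}:=\co(t_0)\varphi+\si(t_0)\nu$ is an immersion at $x$; openness of the immersion condition then extends this to a neighborhood $U$ of $x$. The candidate Gauss map of $\psi_{t_0}|_U$ is $\chi_{t_0}:=\co(t_0)\nu-c\,\si(t_0)\varphi$, which is the formula for $\chi$ in terms of $(\varphi,\nu)$ obtained by inverting the definitions preceding the lemma. One directly checks that $\chi_{t_0}$ is tangent to $\q$ at $\psi_{t_0}$, of unit length, and orthogonal to $d\psi_{t_0}$, using $|\varphi|^2$ constant, $|\nu|^2=1$, $\<\varphi,\nu\>=0$, the Legendrian identity $\<d\varphi,\nu\>=0$ (together with its consequence $\<\varphi,d\nu\>=0$), and $\coca(t_0)+c\,\sica(t_0)=1$. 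The correct orientation follows by continuity from $t_0=0$, where $\chi_0=\nu$ is the Gauss map of $\varphi$ by assumption.

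Write $A:=d\varphi|_x$ and $B:=d\nu|_x$ as linear maps from $T_x\M$ into the $n$-dimensional subspace $W_x:=\{\varphi(x),\nu(x)\}^\perp\cap T_{\varphi(x)}\q$, which carries a positive-definite induced metric in each of the three cases $c\in\{-1,0,1\}$. The remaining task is to find $t_0\in(-\eps,\eps)$ with $\co(t_0)A+\si(t_0)B$ invertible. Its determinant is real-analytic in $t_0$ (indeed polynomial in $\co(t_0),\si(t_0)$), so either it has discrete zero set in $(-\eps,\eps)$---and a good $t_0$ exists---or it vanishes identically. Since the curve $t_0\mapsto(\co(t_0),\si(t_0))$ is not contained in any line through the origin of $\R^2$, identical vanishing is equivalent to the homogeneous polynomial $P(\alpha,\beta):=\det(\alpha A+\beta B)$ vanishing identically on $\R^2$.

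The main obstacle is ruling out $P\equiv 0$. Differentiating the Legendrian identity $\<d\varphi(Y),\nu\>=0$ in a second direction $Z$ and exploiting the symmetry of the second derivative of $\varphi$, one obtains
\[
\<Av,Bw\>=\<Aw,Bv\>\qquad\text{for all }v,w\in T_x\M.
\]
Suppose for contradiction $P\equiv 0$. By the theory of singular matrix pencils (or, equivalently, by matching coefficients of a formal expansion $v(\lambda)=v_0+\lambda v_1+\cdots$ annihilated by $A+\lambda B$), there exist $v_0,v_1\in T_x\M$ with $v_0\neq 0$, $Av_0=0$ and $Av_1=-Bv_0$. Applying the symmetry to the pair $(v_0,v_1)$ yields
\[
\<Av_1,Bv_0\>=\<Av_0,Bv_1\>=0,
\]
and since $Av_1=-Bv_0$ this reads $-|Bv_0|^2=0$ in the positive-definite space $W_x$, forcing $Bv_0=0$. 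Combined with $Av_0=0$, the injectivity of $(\varphi,\nu)$ from Lemma~1 gives $v_0=0$, a contradiction. Hence $P\not\equiv 0$, producing the desired $t_0$.
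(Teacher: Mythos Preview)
Your proof is correct and follows a genuinely different route from the paper's. The paper observes that for $t\neq t'$ the kernels of $\co(t)\,d\varphi_x+\si(t)\,d\nu_x$ and of $\co(t')\,d\varphi_x+\si(t')\,d\nu_x$ intersect trivially (an immediate consequence of Lemma~1) and then asserts that therefore at most $n$ values of $t$ can be singular. You instead show directly that the homogeneous polynomial $P(\alpha,\beta)=\det(\alpha A+\beta B)$ cannot vanish identically, by combining the existence of a polynomial null vector for a singular pencil with the Legendrian symmetry $\<Av,Bw\>=\<Aw,Bv\>$ and the positive definiteness of the target space. Your route is heavier in machinery but also more complete: the paper's inference from ``pairwise trivially intersecting kernels'' to ``at most $n$ singular parameters'' does not hold without further input---for $n=3$, the pair
$A=\left(\begin{smallmatrix}0&1&0\\0&0&1\\0&0&0\end{smallmatrix}\right)$,
$B=\left(\begin{smallmatrix}0&0&0\\1&0&0\\0&1&0\end{smallmatrix}\right)$
has $\ker A\cap\ker B=\{0\}$ and pairwise trivially intersecting one--dimensional kernels $\ker(\alpha A+\beta B)=\mathrm{span}\,(\alpha,0,-\beta)$, yet $\alpha A+\beta B$ is singular for every $(\alpha,\beta)$. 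It is precisely the symmetry of the second fundamental form, which you exploit and the paper's argument does not, that excludes such pencils in the geometric situation at hand.

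Two minor points. Your Gauss map $\chi_{t_0}=\co(t_0)\,\nu-c\,\si(t_0)\,\varphi$ carries the opposite sign to the one printed in the statement; your sign is in fact the correct one, since for $c\neq 0$ only $\co(t_0)\,\nu-c\,\si(t_0)\,\varphi$ is tangent to $\q$ at $\psi_{t_0}$ (this agrees with the inversion formula $\chi=-c\,\si(\te)\varphi+\co(\te)\nu$ earlier in the paper). And for $c=0$ the target $W_x$ should be taken as $\{\nu(x)\}^\perp\subset\R^{n+1}$ rather than $\{\varphi(x),\nu(x)\}^\perp$, since $d\varphi$ need not be orthogonal to $\varphi$ in the flat case; this does not affect the rest of your argument.
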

 
\begin{proof}
Let $(t,t')$ be a pair of  real numbers  such that $t \neq t'$, if $c=-1$ or $c=0$, or such that $t -t' \neq 0 \, [\pi]$, if $c=1$.
Since the pair $(\varphi, \nu)$ is an immersion, we have, at a given point $x$:
$$Ker (\co(t) d\varphi + \si(t) d \nu) \cap Ker  (\co(t') d\varphi +\si( t')  d \nu) = \{ 0\}.$$
It follows that the set
 $ \{ t \in \R \, | \,  \,   \co(t) d\varphi_x +  \si(t)  d\nu_x \mbox {  has not  maximal rank} \}$ contains at most $n$ elements (the dimension of ${\cal M}$) so its complementary is dense in $\R.$ It follows that there exists a  neighbourhood $U$ of $x$ such that 
 $ \{ t \in \R \, | \,  \,   \co(t) \varphi +  \si(t)  \nu \mbox { is  an immersion of }U \}$ contains a neighbourhood of $0$, which implies the first part of the claim.

We then calculate
\begin{eqnarray*} \<d( \co(t_0)  \varphi +  \si(t_0)  \nu),\co(t_0) \nu + c \,  \si(t_0) \varphi\>_0&&\\
 \hspace{6em} =
c \, \si(t_0) \left(  \co(t_0) \< d\varphi, \varphi\> + \si(t_0) \< d\nu, \varphi\>\right).
\end{eqnarray*}
(we have used the fact that both $\<d\varphi, \nu\>$ and $\<d\nu, \nu\>$ vanish). If $c \neq 0,$ $\< d\varphi, \varphi\>$ vanishes as well since $\<\varphi, \varphi\>$ is constant, and 
$ \< d\nu, \varphi\>= -  \< d\varphi, \nu\>$ vanishes as well. 
Hence in all cases $\co(t_0) \nu + c \, \si(t_0) \varphi$ is the Gauss map of $ \co(t_0) \varphi + \si(t_0) \nu$.\end{proof}

Observe that applying a vertical translation $(\psi, \tau) \mapsto (\psi, \tau+t_0)$ to $\overline{\varphi}$, with the constant $t_0$ of Lemma \ref{l2}, has the effect of replacing the map $\varphi$  by $\co(t_0) \varphi + \si(t_0) \nu$. Since the whole discussion is local, there is no loss of generality in assuming that $\varphi$ is an immersion, which we will do henceforth.

In order to compute the geometry of the immersion $\overline{\varphi}$, we will make use of the following lemma, proved in \cite{ON}:
\begin{lemm}\label{lemm:oneill}
Let $X,Y$ be two vector fields on $\q$ and denote by $\pa_t$ the canonical vector field on $I$.
Then the Levi-Civita connection $\overline{D}$ of $\<.,.\>_w$ is related to the Levi-Civita connection $D$ of $\<.,.\>$ by the following relations:
\begin{eqnarray*}
&(i)& \overline{D}_{(0,\pa_t)} (0,\pa_t) = 0\\
&(ii)& \overline{D}_{(0,\pa_t)} (X,0) =  \overline{D}_{(X,0)} (0,\pa_t) = w^{-1} w' (X,0)\\
&(iii)& \overline{D}_{(X,0)} (Y,0) = w w' \<X,Y\> (0,\pa_t) + \big( D_X Y, 0 \big).
\end{eqnarray*}
\end{lemm}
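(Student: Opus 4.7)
The plan is to derive each of the three identities from the Koszul formula
$$2\<\overline{D}_U V, W\>_w = U\<V,W\>_w + V\<U,W\>_w - W\<U,V\>_w + \<[U,V],W\>_w - \<[U,W],V\>_w - \<[V,W],U\>_w,$$
applied to vectors $U,V,W$ chosen from horizontal lifts $(X,0),(Y,0),(Z,0)$ of vector fields on $\q$ and from the canonical vertical field $(0,\pa_t)$. The key algebraic inputs are: (a) horizontal and vertical lifts commute, $[(X,0),(0,\pa_t)]=0$; (b) $\<(X,0),(0,\pa_t)\>_w=0$; (c) $\<(X,0),(Y,0)\>_w=w(t)^2\<X,Y\>$ depends on the base point only through $t$; and (d) $\<(0,\pa_t),(0,\pa_t)\>_w=-1$ is constant.

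To obtain (i), I would plug $U=V=(0,\pa_t)$ into the Koszul formula against either a horizontal or vertical test vector $W$; every one of the six terms on the right vanishes because $\<(0,\pa_t),(0,\pa_t)\>_w$ is constant, the horizontal-vertical pairing is zero, and all the brackets involving $(0,\pa_t)$ are trivial. Non-degeneracy of $\<\cdot,\cdot\>_w$ then gives $\overline{D}_{(0,\pa_t)}(0,\pa_t)=0$.

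For (ii), I would take $U=(X,0)$ and $V=(0,\pa_t)$. The bracket $[U,V]$ vanishes, so torsion-freeness forces $\overline{D}_U V=\overline{D}_V U$ and one computation suffices. Testing the Koszul identity against vertical $W=(0,\pa_t)$ kills every surviving term, so the vertical component of $\overline{D}_{(X,0)}(0,\pa_t)$ is zero; testing against horizontal $W=(Y,0)$, only $(0,\pa_t)\<(X,0),(Y,0)\>_w = 2ww'\<X,Y\>$ survives on the right, matching $2\<w^{-1}w'(X,0),(Y,0)\>_w = 2ww'\<X,Y\>$ on the left.

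For (iii), I would take $U=(X,0)$ and $V=(Y,0)$. Testing against horizontal $W=(Z,0)$, the factor $w(t)^2$ pulls out of the three directional derivatives (which annihilate $t$) and all three brackets remain horizontal, so the Koszul formula on $\q$ reproduces $2w^2\<D_X Y,Z\>=2\<(D_XY,0),(Z,0)\>_w$, identifying the horizontal part of $\overline{D}_U V$ as $(D_X Y,0)$. Testing against $W=(0,\pa_t)$, only the term $-W\<U,V\>_w=-2ww'\<X,Y\>$ survives; dividing by $\<(0,\pa_t),(0,\pa_t)\>_w=-1$ gives the vertical coefficient $ww'\<X,Y\>$. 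The only real bookkeeping obstacle, nothing conceptual, is tracking the sign introduced by the timelike normalization $\<(0,\pa_t),(0,\pa_t)\>_w=-1$, which flips the usual Riemannian warped-product sign but remains self-consistent throughout.
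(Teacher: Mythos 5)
Your Koszul-formula computation is correct, including the sign bookkeeping coming from $\<(0,\pa_t),(0,\pa_t)\>_w=-1$, which correctly yields the vertical coefficient $ww'\<X,Y\>$ in (iii) and the factor $w^{-1}w'$ in (ii). The paper itself gives no proof of this lemma --- it simply cites O'Neill's book --- and the proof there of the warped-product connection formulas is precisely this computation with horizontal and vertical lifts in the Koszul identity, so your argument is essentially the standard one being invoked.
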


\begin{lemm}\label{lemm:metric}
Denote by $g:=\varphi^*\<\cdot,\cdot\>$ the metric induced on $\M$ by $\varphi$ and $A$ the shape operator associated to $\nu$.
Then the metric $\overline{g}=\overline{\varphi}^* \<\cdot,\cdot\>_w$ induced on $\M$ by $\overline{\varphi}$ is given by \em 
 \begin{eqnarray*}
  \overline{g}= w^2 \Big( \coca(\te)\,  g - 2 \si(\te) \co(\te) g(A \, \cdot, \cdot) + \sica(\te) g(A \, \cdot, A \, \cdot) \Big).
 \end{eqnarray*} \em
Let $(e_1, \ldots,e_n)$ be a local frame on $\M$ which is $g$-orthonormal, i.e.\ $g(e_i,e_j)=~\delta_{ij}$, and principal, i.e.\ it diagonalizes $A$.
 We set
  $\overline{E}_i:=d\overline{\varphi}(e_i), \, \, \forall  i=1,\ldots,n.$ Then the second fundamental form of $\overline{\varphi}$ in the direction of the null vector $\overline{\nu}$ is given by:
\em
\begin{eqnarray*} 
  \< \overline{D}_{\overline{E}_i} \overline{\nu} , \overline{E}_j \>_w &=& 
  -w^2(\co(\te) - \ka_i \si(\te)) ( \ka_i \co(\te) + c \, \si (\te))  \delta_{ij}\\
&&  + w^2 w' (\co(\te) - \ka_i \si(\te) )^2  \delta_{ij}.
 \end{eqnarray*}
\em
\end{lemm}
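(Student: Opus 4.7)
The plan is to expand everything in terms of $d\varphi$, $d\nu$, $d\tau$ and $\chi$ using the relations collected in \eqref{eq:d-formulas}, and to handle the warped-product connection via Lemma~\ref{lemm:oneill}. Two elementary remarks do most of the work. First, because $\nu$ is the Gauss map of $\varphi$ and $\<\chi,\chi\>$ is constant, we have $\<d\varphi,\chi\> = \<d\nu,\chi\> = 0$ (using in addition $\<d\varphi,\varphi\>=0$ and $\<d\nu,\varphi\> = -\<d\varphi,\nu\> = 0$ when $c\neq 0$). Second, in the principal frame $(e_i)$ one has $d\nu(e_i) = -\ka_i d\varphi(e_i)$, so $\<d\varphi(X), d\nu(Y)\> = -g(X,AY)$ and $\<d\nu(X),d\nu(Y)\> = g(AX,AY)$.

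For the induced metric, the idea is to substitute $d\psi = \te' d\tau\,\chi + \co(\te) d\varphi + \si(\te) d\nu$ into $\overline g(X,Y) = w^2 \<d\psi(X), d\psi(Y)\> - d\tau(X) d\tau(Y)$. By the first remark the cross terms involving $\chi$ disappear, and since $\te' = 1/w$ and $\<\chi,\chi\> = 1$, the diagonal $\chi$-contribution $w^2 (\te')^2 d\tau(X)d\tau(Y) = d\tau(X)d\tau(Y)$ cancels the $-d\tau(X)d\tau(Y)$ term exactly. The three remaining terms yield the announced decomposition via the second remark.

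For the second fundamental form, the plan is to write $\overline\nu = (\chi,0) + (0, w\circ\tau\,\pa_t)$ and $\overline E_i = (d\psi(e_i),0) + (0, d\tau(e_i)\pa_t)$, and to apply bilinearity together with Lemma~\ref{lemm:oneill}. This produces the general formula $\overline D_U V = (D_u v + (w'/w)(v_4 u + u_4 v),\, [ww'\<u,v\> + U(v_4)]\pa_t)$, which we specialize to $u = d\psi(e_i)$, $u_4 = d\tau(e_i)$, $v = \chi$, $v_4 = w\circ\tau$. The only nontrivial horizontal ingredient is $D_{d\psi(e_i)}\chi$. Starting from $\chi = -c\si(\te)\varphi + \co(\te)\nu$, differentiating, and then correcting by $D_XY = \tilde D_X Y + c\<X,Y\>\psi$ to pass from the ambient flat connection $\tilde D$ to the connection $D$ of $\q$, the spurious term $c\te'd\tau(e_i)\psi$ arising from the differentiation is killed by the correction $c\<d\psi(e_i),\chi\>\psi$, leaving $D_{d\psi(e_i)}\chi = -(\ka_i\co(\te)+c\si(\te))d\varphi(e_i)$.

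The main bookkeeping obstacle is then the cancellation of all $d\tau(e_i)d\tau(e_j)$ contributions when pairing with $\overline E_j$ through $\<.,.\>_w = w^2\<.,.\> - dt^2$. Three such contributions appear: from the $(\te')^2\<\chi,\chi\>$ piece of $\<d\psi(e_i),d\psi(e_j)\>$ (scaled by $w'$ from the $w'd\psi(e_i)$ term), from pairing the $(w'/w)d\tau(e_i)\chi$ correction with the $\chi$-part of $d\psi(e_j)$, and from the $-dt^2$ pairing with the vertical part of $\overline D_{\overline E_i}\overline\nu$ (which, by the general formula above and $\<d\psi,\chi\> = \te'd\tau$, equals $2w'd\tau(e_i)\pa_t$). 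Verifying that these three terms sum to zero is the delicate step. What survives is exactly $w^2\bigl[-(\ka_i\co(\te)+c\si(\te))+w'(\co(\te)-\ka_i\si(\te))\bigr](\co(\te)-\ka_i\si(\te))\delta_{ij}$, which rearranges to the claimed formula.
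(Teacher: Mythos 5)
Your proposal is correct and follows essentially the same route as the paper: expand $d\psi$ via \eqref{eq:d-formulas}, observe that the cross terms with $\chi$ vanish and that $w^2(\te')^2\,d\tau^2$ cancels $-d\tau^2$ for the metric, then split $\overline{D}_{\overline{E}_i}\overline{\nu}$ into four pieces handled by Lemma~\ref{lemm:oneill}, with $D_{d\psi(e_i)}\chi=-(\ka_i\co(\te)+c\,\si(\te))d\varphi(e_i)$ as the only nontrivial horizontal term. The one (cosmetic) difference is at the final bookkeeping step: where you check by hand that the three $d\tau(e_i)d\tau(e_j)$ contributions cancel, the paper groups the corresponding terms into $w'w^{-1}d\tau(e_i)\,\overline{\nu}$ and discards them at once because $\overline{\nu}$ is null and normal to $\overline{E}_j$ --- both give the same result.
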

\begin{rema} \em If $w=1$ we recover the formulas derived in \cite{AG} in the cases $\R^{n+2}_1, \S^{n+1}\times \R$ and $\H^{n+1}\times \R$. \em \end{rema}

\begin{proof}
Using  Equation \eqref{eq:d-formulas} we calculate
 \begin{eqnarray*}
  \< d\psi,d\psi \> &=& 
          \coca(\te) \< d\varphi,d\varphi \> + 2 \si(\te) \co(\te) \< d\varphi,d\nu \> \\
                               && + \, \sica(\te) \< d\nu,d\nu \> + (\te')^2 d\tau^2\\
                   && + 2 \te' d\tau \Big( \co(\te) \< d\varphi,\chi \> + \si(\te) \< d\nu,\chi \> \Big) \\
                      &=& \coca(\te) \< d\varphi,d\varphi \> + 2 \si(\te) \co(\te) \< d\varphi,d\nu \> \\
                               && + \, \sica(\te) \< d\nu,d\nu \> + (\te')^2 d\tau^2.
 \end{eqnarray*}
 It follows that
 \begin{eqnarray*}
  \< d\overline{\varphi} , d\overline{\varphi} \>_w &=& w^2 \< d\psi,d\psi \> - d\tau d\tau \\
          &=& w^2 \Big( \coca(\te) \< d\varphi,d\varphi \> + 2 \si(\te) \co(\te) \< d\varphi,d\nu \>\\ 
               &&   + \, \sica(\te) \< d\nu,d\nu \> \Big).
 \end{eqnarray*}
 Taking into account that $-d\nu =d\varphi \circ A,$ this gives the required  formula for~$\overline{g}$.
 
\medskip

We now calculate the second fundamental form of $\overline{\varphi}$. Setting $E_i=d\varphi(e_i)$, we first observe that
\begin{eqnarray}\label{eq:dpsi}
  d\psi(e_i) = (\co(\te)-\ka_i \si (\te))E_i +w^{-1} d\tau(e_i)\chi.
 \end{eqnarray}
 Splitting the covariant derivative $\overline{D}_{\overline{E}_i} \overline{\nu}$ in four terms and applying Lemma~\ref{lemm:oneill} to each of them, we get:
 \begin{eqnarray*}
  \overline{D}_{\overline{E}_i} \overline{\nu}&=& \overline{D}_{(d\psi(e_i),0)} (\chi,0) + \overline{D}_{(d\psi(e_i),0)} (0, w ) \\
                      &&  +   \overline{D}_{(0,d\tau(e_i))} (\chi,0) + \overline{D}_{(0,d\tau(e_i))} (0, w ) \\
                           &=& ww' \< d\psi(e_i),\chi \> (0,1) + ( D_{d\psi(e_i)}\chi,0 ) +w'(d\psi(e_i),0) \\
                                  && +  d\tau(e_i) w'w^{-1} (\chi,0) + d\tau(e_i)w'(0,1) \\
           &=& w' d\tau(e_i)(0,1)+   w'(d\psi(e_i),0)+\Big( D_{d\psi(e_i)} (-c \, \si (\te) \varphi + \co(\te) \nu), 0 \Big)\\
                               && + d\tau(e_i) w' w^{-1}( \chi, w)\\
                                         &=& w' \overline{E}_i + \Big( -c\si (\te) E_i-  \co(\te) \ka_iE_i , 0 \Big)  + d\tau(e_i)w'w^{-1}\overline{\nu}.
  \end{eqnarray*}
(we have use the fact that $\<d\psi, \chi\>=w^{-1}d\tau$).
Recalling that $\overline{\nu}$ is normal to $\overline{\varphi}$, we conclude
\begin{eqnarray*}
   \< \overline{D}_{\overline{E}_i} \overline{\nu}, \overline{E}_j\>_w&=&w'  \< \overline{E}_i, \overline{E}_j\>_w- (  \ka_i \co(\te) +c\si (\te)) \< (E_i , 0),  (d\psi(e_j), d\tau(e_j) \>_w
   \\
&=&  + w^2 w'  (\co(\te)- \ka_i \si (\te))^2 \delta_{ij}\\
 &&-w^2  (\ka_i \co(\te) +c\,\si (\te)  ) \< E_i,  (\co(\te)- \ka_j \si (\te))E_j \> \\
  && -w^2  (  \ka_i \co(\te)+c\,\si (\te) ) \< E_i, w^{-1} d\tau(e_j)\chi \>\\
  &=& w^2 w'  (\co(\te)- \ka_i \si (\te))^2 \delta_{ij}\\
&& -w^2  (  \ka_i \co(\te)+ c\,\si (\te) )  (\co(\te)-\ka_i\si (\te))\delta_{ij}.
  \end{eqnarray*}
 \end{proof}

\section{Proof of the results}

\subsection{Proof of Theorem \ref{mainT}}

Let $\overline{\varphi}$ be a spacelike, codimension two immersion of $\q \times_w~I$ and denote by $\vec{H}$ its mean curvature vector field. Since the normal plane is two-dimensional, the MT assumption $\<\vec{H},\vec{H}\>_w=0$ is equivalent to the fact that $\vec{H}$ is collinear to one of the two normal, null fields. Since the choice of $\overline{\nu}$ in the previous section was arbitrary, there is no loss of generality in studying the case in which $\vec{H}$ is collinear to $\overline{\nu}$, i.e.\  $\< \vec{H},\overline{\nu} \>_w$ vanishes.

We use the notations and the results of  Lemma \ref{lemm:metric}. In particular, recalling that the local frame $(\overline{E}_1, \dots,\overline{E}_n)$ is orthogonal (but not orthonormal), we have:
\begin{eqnarray*}
 \< \vec{H},\overline{\nu} \>_w &=& \frac{1}{n} \sum_{i=1}^n \frac{-\< \overline{D}_{\overline{E}_i} \overline{\nu} , \overline{E}_i\>_w }{\<\overline{E}_i,\overline{E}_i\>_w}\\               
                     &=& \frac{1}{n} \sum_{i=1}^n \Big( \frac{\ka_i \co(\te) + c \, \si(\te)}{\co(\te) - \ka_i \si(\te)} - w' \Big) \\
                     &=& \frac{1}{n} \sum_{i=1}^n \Big( \frac{\ka_i  \ct(\te)+ c }{\ct(\te) - \ka_i } \Big) - w'.
\end{eqnarray*}
Therefore, counting the multiplicities of the principal curvatures, $\< \vec{H}, \overline{\nu} \>_w $ vanishes if and only if
\begin{eqnarray*}
 n\, w' - \sum_{i=1}^p m_i  \frac{\ka_i  \ct(\te)+ c }{\ct(\te) - \ka_i } = 0,
\end{eqnarray*}
which is the required formula.

\subsection{Proof of Corollary \ref{slice}}

The fact that $\overline{\varphi}$ is contained in a time slice means that $\tau$ is constant, and therefore $\te \circ \tau $ and $w' \circ \tau$ are constant as well. In particular, by Equation (\ref{main}), the quantity
$$\sum_{i=1}^p m_i  \frac{\ka_i  \ct(\te)+ c }{\ct(\te) - \ka_i }$$
 is constant.
Moreover, for $\te$ small enough in order to avoid focal sets, the image of the immersion $\psi= \co(\te) \varphi+ \si(\te) \nu$ is nothing but the equidistant hypersurface to  $\varphi(\M)$ in $\q$ at distance $\te.$  It is easy to check that its principal curvatures are 
$ \frac{\ka_i  \ct(\te)+ c }{\ct(\te) - \ka_i }$. Therefore, the mean curvature of $\psi$ in $\q$ is the constant
$\frac{1}{n} \sum_{i=1}^p m_i  \frac{\ka_i  \ct(\te)+ c }{\ct(\te) - \ka_i },$ which, by Equation  (\ref{main}) is equal to $w'(\tau)$.

\subsection{Curves with null acceleration (proof of Corollary \ref{curves})}
 Writing 
$\overline{\ga}$ (resp.\ $\ga$) instead of $\overline{\varphi}$ (resp.\ $\varphi$), we have
 $\overline{\ga} := (\cos (\te \circ \tau) \ga+\sin (\te \circ \tau) \nu , \tau)$,
where $\ga$ is  a parametrized curve of $\mathbb Q^2_c$ with unit normal $\nu$.
Equation (\ref{main}), which is now the necessary and sufficient condition for the  acceleration of the curve  $\overline{\ga}$ to be collinear to the null vector $\overline{\nu}= (\chi, w \circ \tau)$, becomes
$$  w' \circ \tau-\frac{\ka \, \ct (\te \circ \tau)+ c }{\ct  (\te\circ \tau)- \ka} =0,$$
where $\ka$ is the curvature of $\ga$. This is equivalent to:
$$ \ka = \left( \frac{w'  \ct(\te)- c  }{w' + \ct (\te)  }\right) \circ \tau.$$
Hence the curve $\ga$ is explicitely determined, via its curvature function,  by the warped factor $w$ and the height function $\tau$. 
Moreover, if the function $\Omega(t):=\frac{w'  \ct(\te)- c  }{w' + \ct (\te)  }$ is  invertible, given an arbitrary curve $\ga$ of $\mathbb Q^2_c$  with curvature function $\ka$, we can build a curve $\overline{\ga} $ of  $\mathbb Q^2_c \times_w I $ with null acceleration by setting $\tau := \Omega^{-1} \circ \ka$ (here $\Omega^{-1}$ denotes the inverse with respect to the composition, not the algebraic inverse).

\medskip

An interesting case in which the function $\Omega$ is constant is  when $w(t)=e^t$ and $c=0$, since 
$\R^2 \times_{e^t} \R$ is locally isometric to a dense open subset of the de Sitter space $d\S^3$.
In this case, since $w^{-1}(t) = e^{-t}$ and
$\te (t)= -e^{-t} + C$, we deduce that
$$\ka= \frac{e^{\tau}}{(-e^{-\tau} + C )e^{\tau}+1}=\frac{e^\tau}{C e^{\tau}}=C^{-1}.$$
Hence here $\ga$ must be a circle. Since the situation is similar to that of codimension two submanifolds with null second fundamental form, we refer to Section \ref{null2ffS} for more detail.


\subsection{Local existence of solutions (proof of Corollary \ref{sol})}
Let $(\ka_i, \ka_{i+1})$ be a pair of consecutive principal curvatures, i.e.\ there is no other principal curvature in the open interval $(\ka_i,\ka_{i+1})$. If $c \neq 1$, we need one more assumption: if
$c=0$, we
furthermore assume that $\ka_i \ka_{i+1} \neq 0$, while if $c=-1$, we make the  assumption that either
$\ka_i, \ka_{i+1}\ > 1$ or $\ka_i, \ka_{i+1}\ <-1$. In all cases, it follows that there exists a real interval  $[\te_{1}^i, \te_2^i] $ such that the real function $\ct$ is a bijection from $[\ka_i,\ka_{i+1}]$ into $[\te_1^i,\te_2^i]$. 
We now introduce:
$$G(s):=-n \, w' \circ \, \te^{-1} \prod_{k=1}^p (\ct(s)- \ka_k ) + \sum_{k=1}^p m_k (\ct(s) \ka_k + c )  \prod_{j\neq k}^p (\ct(s) - \ka_j ).$$
A calculation shows that
\begin{eqnarray*} G(\te_1^i)G(\te_2^i) &=& m_i m_{i+1} (\ka_i^2 +c )  (\ka_{i+1}^2 +c ) \prod_{j \neq i}^p (\ka_i-\ka_j) \! \! \!\prod_{j \neq i+1}^p \! \!(\ka_{i+1}-\ka_j)\\
 &=&   m_i m_{i+1} (\ka_i^2 +c )  (\ka_{i+1}^2 +c )  (\ka_i-\ka_{i+1})(\ka_{i+1}-\ka_i) \!\!\! \!\prod_{j \neq i, i+1}^p \! \! \!(\ka_i-\ka_j)^2 \\
 &< &0.
\end{eqnarray*}
Hence, by the mean value theorem, for any point $x \in \M$ such that the assumptions made on $\ka_i$ and $\ka_{i+1}$ hold, there exists $s_i(x) \in (\te_1^i,\te_2^i)$ such that $ G(s_i) $ vanishes. Since the real function map $G$ is of class $C^2$, the function $s_i(x)$ is $C^2$ as well by the implicit function theorem. 
 Recalling  that  $\int_I \frac{dt}{w(t)}=+\infty$, i.e.\  $\te$ is a bijection from $I$ to $\R$, we set $\tau_i:= \te^{-1} \circ s_i$. Clearly, $\tau_i$ is a solution of (\ref{main}) and is $I$-valued.

To conclude the proof, observe that in the elliptic case $c=1$, we have obtained exactly $p-1$ solutions, while in the flat case $c=0$, the number of solutions is equal to the number of non-vanishing principal curvatures minus one. Finally, in the hyperbolic case $c=-1$, the number of solutions is equal to the number of  principal curvatures $\ka_i >1$ minus one, plus the number of  principal curvatures $\ka_i <1$ minus one.

\subsection{Submanifolds with null second fundamental form (proof of Theorem \ref{Tnull2ff})}
\label{null2ffS}
Lemma \ref{lemm:metric} implies that 
the assumption "null second fundamental form" amounts to the vanishing of 
$$ \ka_i \co (\te)+ c \,  \si(\te) - w' (\co(\te) - \ka_i \si(\te)), \quad \forall i, \, 1 \leq i \leq p .$$
 This is equivalent to
\begin{equation} \ka_i = \left( \frac{w' \,  \ct(\te)- c }{w' + \ct(\te)  } \right) \circ \tau=:\Omega \circ \tau, \quad  \forall  i, \, 1 \leq i \leq p.  \label{null2ff}
\end{equation}

We first deduce from this equation that  the immersion $\varphi$ cannot have two distinct principal curvatures at a given point $ x\in \M$, i.e.\  $\varphi$ is umbilical (or totally geodesic). It is well known that, if $n>1$, 
umbilical hypersurfaces in space forms have constant curvature. 

Next, since now the curvature does not depend on the point $x \in \M$ then either $\tau$ or $\Omega$ must be constant. If $\Omega$ is not constant, $\tau$ must be constant, i.e.\ $\overline{\varphi}$ is contained in a time slice $\{t = T\}.$
This is a special case of Corollary \ref{slice}. Hence the hypersurface $\psi(\M)$ is equidistant to $\varphi(\M)$, and is totally umbilic (or totally geodesic) as well. According to Equation (\ref{main}), its curvature is then the constant $w'(\tau)$. We then get the first part of Theorem \ref{Tnull2ff}.

If $\Omega$ is constant, $\tau$ may be chosen arbitrary and we get the second part of the theorem.

\subsubsection*{Recovering the de Sitter space}

We end this section by a short discussion about a special case in which $\Omega(t)$ is constant.
Recalling that $\te'=w^{-1}$, we deduce that $w'= -\frac{\te''}{(\te')^2}.$
Hence the assumption $\Omega(t)$ is constant is equivalent to the second order ODE:
$$\frac{\te''   \ct(\te)+ c  (\te')^2}{ \te''  -(\te')^2 \ct(\te)   }= C_0.$$
It seems difficult to solve explicitely  this equation, but there does exist a solution which is interesting geometrically, namely $c=0$ and $w(t)=e^t$: the warped product 
$\R^{n+1} \times_{e^t} \R$ is actually isometric to a dense open subset of the de Sitter space $d\S^{n+2}$. This isometry can be realized by setting
$$\left\{ \begin{array}{cclc}  x_i &:=& e^t y_i  & \quad \quad \quad 1 \leq i \leq n+1,\\  
 x_{n+2} &:=& \cosh(t) - \frac{e^t}{2}\sum_{i=1}^{n+1} y_i^2&\\ 
                             x_{n+3} &:=& \sinh(t) + \frac{ e^t}{2} \sum_{i=1}^{n+1} y_i^2&
\end{array} \right.$$
where $(y_1, \ldots, y_{n+1})$ denote the canonical coordinates on $\R^{n+1}$ and $(x_1, \ldots , x_{n+3})$ are canonical coordinates on $\R^{n+3}$. Here we  use the hyperboloid model
$$ d\S^{n+2} = \{  (x_1, \ldots , x_{n+3} ) \in \R^{n+3} \, | \, \,  x_1^2 + \cdots + x_{n+1}^2 + x_{n+2}^2 - x_{n+3}^2=1\}.$$
It is proved in \cite{AG} that a submanifold of $d\S^{n+2}$ of codimension two with null second fundamental form can be locally parametrized by
$$ \overline{\varphi}_1 (x) = (\iota(x),0)+  \tau_1(x) (\chi_1,1),$$
where $\iota:  \S^n \to \S^{n+1}$ is a totally geodesic embedding with Gauss map  $\chi_1$ (this is a constant vector), and $\tau_1 \in C^2(\S^{n})$; for example we can take $\iota(x):= (x,0)$, and  $\chi_1:=(0,1)$.
By a straightforward calculation,  the composition of  $ \overline{\varphi}_1$ with the isometry $d\S^{n+2} \to \R^{n+1} \times_{e^t} I$ gives the immersion
$$\overline{\varphi}_2 = (e^{-\tau_2}x, \tau_2),$$
where  $\tau_2 := \log(2\tau_1).$
We now see that this is consistent with Theoreom \ref{Tnull2ff}}:
we have, $\forall (x,v) \in T\S^n,$
$$ d\overline{\varphi}_2(v) = (  (v- d\tau_2(v) x) e^{-\tau_2}, d\tau_2(v))$$
so that a null, normal vector along $\overline{\varphi}_2$ is
$$\overline{\nu}(x)= (-x, e^{\tau_2}).$$
Hence, using the fact that here $\theta(t)= -e^{-t} + C,$ where $C$ is a real constant, we obtain
$$\varphi_2 = \psi - \theta \chi = e^{-\tau_2}x -(-e^{-\tau_2} + C) (-x) =Cx.$$
Hence $\varphi_2$ is a totally umbilic immersion, as required.






\medskip

\noindent

\begin{multicols}{2}
\noindent   Henri Anciaux \\
Universit\'e Libre de Bruxelles \\
  CP 216, local O.7.110  \\
   Bd du Triomphe \\
 1050 Brussels, Belgium   \\
henri.anciaux@gmail.com \\

\columnbreak

\noindent Nastassja Cipriani \\
KU Leuven, Geometry Section \\
Celestijnenlaan 200B \\
3001 Leuven, Belgium \\
nastassja.cipriani@wis.kuleuven.be

\end{multicols}

\end{document}